\newcommand{\noun}[1]{\textsc{#1}}
\theoremstyle{plain}
\newtheorem{thm}{\protect\theoremname}
  \theoremstyle{definition}
  \newtheorem*{example*}{\protect\examplename}
  \theoremstyle{plain}
  \newtheorem{prop}[thm]{\protect\propositionname}
  \theoremstyle{definition}
  \newtheorem*{problem*}{\protect\problemname}
\newcounter{EQNR}
  \providecommand{\examplename}{Example}
  \providecommand{\problemname}{Problem}
  \providecommand{\propositionname}{Proposition}
\providecommand{\theoremname}{Theorem}
\begin{document}

\title{Elements of a metric spectral theory}

\author{\date{\emph{Dedicated to Margulis, with admiration}}Anders Karlsson
\thanks{Supported in part by the Swiss NSF.}}
\maketitle
\begin{abstract}
This paper discusses a general method for spectral type theorems using
metric spaces instead of vector spaces. Advantages of this approach
are that it applies to genuinely nonlinear situations and also to
random versions. Metric analogs of operator norm, spectral radius,
eigenvalue, linear functional, and weak convergence are suggested.
Applications explained include generalizations of the mean ergodic
theorem, the Wolff-Denjoy theorem and Thurston's spectral theorem
for surface homeomorphisms.
\end{abstract}

\section{Introduction}

In one line of development of mathematics, considerations progressed
from concrete functions, to vector spaces of functions, and then to
abstract vector spaces. In parallel, the standard operations, such
as derivatives and integrals, were generalized to the abstract notions
of linear operators, linear functionals and scalar products. The study
of the category of topological vector spaces and continuous linear
maps is basically what is now called functional analysis. Dieudonne
wrote that if one were to reduce the complicated history of functional
analysis to a few keywords, the emphasis should fall on the evolution
of two concepts: \emph{spectral theory} and \emph{duality} \cite{Di81}.
Needless to say, as most often is the case, the abstract general study
does not supercede the more concrete considerations in every respect.
In the context of analysis, one can compare the two different points
of view in the excellent texts \cite{L02} and \cite{StS11}. 

The metric space axioms were born out of the same development, see
the historical note in \cite{Bo87} or \cite{Di81}. In the present
paper, I would like to argue for a another step: from normed vector
spaces to metric spaces (and their generalizations), and bounded linear
operators to semicontractions. This could be called \emph{metric functional
analysis}, or in view of the particular focus here, a \emph{metric
spectral theory}. Indeed we will in the metric setting discuss a spectral
principle and duality in form of metric functionals. This is motivated
by situations which are genuinely non-linear, but there is also an
interest in the metric perspective even in the linear case. The latter
can be examplified by a well-known classical instance: for many questions
in the study of groups of $2\times2$ real matrices, it is easier
to employ their (associated) isometric action on the hyperbolic plane,
which is indeed a metric and not a linear space, instead of the linear
action on $\mathbb{R}^{2}$. The isometric action of $\textrm{PSL\ensuremath{_{2}}}(\mathbb{R})$
is by fractional linear transformations preserving the upper half
plane. This generalizes to $n\times n$ matrices and the accociated
symmetric space.

Geometric group theory is a subject that has influenced the development
of metric geometry during the last few decades. The most instrumental
contribution was made by Gromov, who in particular found inspiration
from combinatorial group theory and the Mostow-Margulis rigidity theory
(for example, the Gromov product appeared in Lyndon's work, Mostow
introduced the crucial notion of quasi-isometry, and Margulis noted
how one can argue just in terms of word metrics in this context of
quasi-isometries and boundary maps).

There is another strand of metric geometry sometimes called the \emph{Ribe
program}, see Naor's recent ICM plenary lecture \cite{N18} for some
history and appropriate references. Bourgain wrote already in 1986
\cite{B86} in this context that: ``the notions from local theory
of normed spaces are determined by the metric structure of the space
and thus have a purely metrical formulation. The next step consists
in studying these metrical concepts in general metric spaces in an
attempt to develop an analogue of the linear theory.'' The present
text suggests something similar, yet rather different. The properties
of the Banach spaces and metric spaces studied in the Ribe program
are rather subtle, in contrast here we are much more basic and in
particular motivated by understanding distance-preserving self-maps
that we see as a kind of metric spectral theory with consequences
within several areas of mathematics: geometry, topology, group theory,
ergodic theory, probability, complex analysis, operator theory, fixed
point theory etc.

We consider metric spaces $(X,d)$, at times with the symmetry axiom
removed, and the corresponding morphisms, here called semicontractions\emph{
}(in contrast to bi-Lipschitz maps in the context of Bourgain, Naor
et al)\emph{. }A map $f$ between two metric spaces is a \emph{semicontraction}
if distances are not increased, that is, for any two points $x$ and
$y$ it holds that:
\[
d(f(x),f(y))\leq d(x,y).
\]

Synonyms are $1$-Lipschitz or non-expansive maps. One could wonder
whether in such a very general setting there would be something worthwhile
to uncover, but one useful general statement is the \emph{contraction
mapping principle }which is a basic tool for finding solutions to
equations. The abstract statement appeared in Banach's thesis, but
some version might have been used before (for the existence and uniqueness
of solutions to certtain ordinary differential equations). In this
paper I will suggest a complement to this principle, which basically
appeared in \cite{Ka01}, and that is valid even more generally than
the contraction mapping principle since isometries are included.

The objective here is to discuss metric space analogs of the following
linear concepts:
\begin{itemize}
\item linear functionals and weak topology
\item operator norm and spectral radius
\item eigenvalues and Lyapunov exponents,
\end{itemize}
and then show how these metric notions can be applied. At the center
for applications is, as already indicated, a complement to the contraction
mapping principle, namely a \emph{spectral principle} \cite{Ka01,GV12},
its ergodic theoretic generalization \cite{KaM99,KaL11,GK15}, see
also \cite{G18}, and a special type of metrics that could be called
spectral metrics \cite{T86,Ka14}. 

Here is an example: Let $M$ be an oriented closed surface of genus
$g\geq2$. Let $\mathcal{S}$ denote the isotopy classes of simple
closed curves on $M$ not isotopically trivial. For a Riemannian metric
$\rho$ on $M$, let $l_{\rho}(\beta)$ be the infimum of the length
of curves isotopic to $\beta.$ In a seminal preprint from 1976 \cite{T88},
Thurston could show the following consequence (the details are worked
out in \cite[Theoreme Spectrale]{FLP79}): 
\begin{thm}
(\cite[Theorem 5]{T88}) \label{thm:Thurston-1}For any diffeomorphism
$f$ of $M$, there is a finite set $1\leq\lambda_{1}<\lambda_{2}<...<\lambda_{K}$
of algebraic integers such that for any $\alpha\in\mathcal{S}$ there
is a $\lambda_{i}$ such that for any Riemannian metric $\rho$, 
\[
\lim_{n\rightarrow\infty}l_{\rho}(f^{n}\alpha)^{1/n}=\lambda_{i}.
\]
 The map $f$ is isotopic to a pseudo-Anosov map iff $K=1$ and $\lambda_{1}>1.$
\end{thm}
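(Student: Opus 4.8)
The plan is to first remove the dependence on the Riemannian metric and then invoke the Nielsen--Thurston structure theory, which in the spirit of the present paper can itself be extracted from the dynamics of $f$ on a suitable metric compactification. Since $M$ is compact, any two Riemannian metrics $\rho,\rho'$ are bi-Lipschitz, so there is $C>1$ with $C^{-1}l_{\rho'}(\beta)\le l_\rho(\beta)\le C\,l_{\rho'}(\beta)$ for every $\beta\in\mathcal{S}$; taking $n$-th roots and letting $n\to\infty$ shows that the limit, if it exists, is independent of $\rho$. I would therefore fix a hyperbolic metric $X$ (available since $g\ge 2$) and work with the hyperbolic length $l_X$. The basic comparison I would rely on is that, for a filling pair of measured foliations $(\mu^s,\mu^u)$, the function $\beta\mapsto i(\beta,\mu^s)+i(\beta,\mu^u)$ --- with $i$ the geometric intersection form on Thurston's space $\mathcal{MF}$ --- is continuous, positive and homogeneous, hence comparable to $l_X$ up to uniform multiplicative constants by compactness of the projectivization $\mathcal{PMF}$. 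Because $n$-th roots absorb constants, it suffices to compute the growth of these intersection numbers.

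For the classification I would run the action of $f$ on the closed ball $\overline{\mathcal{T}}=\mathcal{T}\cup\mathcal{PMF}$: a Wolff--Denjoy/fixed-point argument of the type discussed above produces either an interior fixed point (forcing $f$ periodic) or an invariant projective class on the boundary. In the \emph{pseudo-Anosov} case one obtains transverse invariant foliations with $f\mu^u=\lambda\mu^u$ and $f\mu^s=\lambda^{-1}\mu^s$ for a dilatation $\lambda>1$. Then, using invariance of $i$ and $f^{-1}\mu^s=\lambda\mu^s$, one computes $i(f^n\alpha,\mu^s)=\lambda^n i(\alpha,\mu^s)$ and $i(f^n\alpha,\mu^u)=\lambda^{-n}i(\alpha,\mu^u)$; since $\mathcal{F}^s$ is arational, $i(\alpha,\mu^s)>0$ for every essential $\alpha$, so $l_X(f^n\alpha)\asymp\lambda^n$ and the limit is $\lambda$ for \emph{all} $\alpha$, giving $K=1$ and $\lambda_1=\lambda>1$. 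That $\lambda$ is an algebraic integer I would obtain by carrying $\mathcal{F}^u$ on an $f$-invariant train track: $f$ acts on the weights by a nonnegative integer transition matrix, $\lambda$ is its Perron--Frobenius eigenvalue, and the characteristic polynomial is monic with integer coefficients.

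The remaining cases are assembled similarly. If $f$ is periodic then every orbit $\{f^n\alpha\}$ is finite, $l_X(f^n\alpha)$ is bounded, and the rate is $1$, so $K=1$, $\lambda_1=1$. If $f$ is reducible I would pass to the canonical reduction system $\mathcal{C}$ and the induced decomposition, on whose pieces the first-return maps are periodic or pseudo-Anosov with dilatations $\lambda^{(1)},\dots,\lambda^{(r)}>1$; a curve $\alpha$ is, after a bounded power, straightened with respect to $\mathcal{C}$, and its length growth is governed by the largest dilatation among the pseudo-Anosov pieces it essentially crosses, while curves isotopic into periodic or annular pieces grow at rate $1$. The achievable limits thus form the finite set $\{1\}\cup\{\text{crossed dilatations}\}$ of algebraic integers; ordering them gives $1\le\lambda_1<\dots<\lambda_K$, and each $\alpha$ realizes exactly one. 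For the equivalence: pseudo-Anosov gives $K=1,\ \lambda_1>1$ by the computation above, and conversely if some rate equals $1$ (a periodic or annular piece is present) or if two distinct rates above $1$ occur (distinct pieces), then $f$ is not pseudo-Anosov; hence $K=1$ together with $\lambda_1>1$ forces the pseudo-Anosov type.

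The main obstacle is the structure-theoretic backbone: the Nielsen--Thurston trichotomy, the existence of the invariant foliations and dilatation in the pseudo-Anosov case, and the canonical reduction system in the reducible case. Everything else reduces to the intersection-number asymptotics above, which also supply the \emph{existence} of the limit --- this does not follow from soft subadditivity alone, since the relevant ``operator'' depends on $\alpha$. It is precisely here that the metric viewpoint of the paper enters: the classification is the output of a Wolff--Denjoy-type theorem for the isometric action on Teichm\"uller space and its boundary $\mathcal{PMF}$, the invariant class $[\mu^u]$ plays the role of an eigendirection (a metric functional), and $\log\lambda$ is the metric spectral radius realized as the stable translation length $\lim_{n}d_{\mathcal{T}}(X,f^nX)/n$.
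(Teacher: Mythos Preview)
Your outline is essentially the classical \textsc{FLP} argument (Nielsen--Thurston classification, intersection numbers against the invariant foliations, train tracks for algebraicity, canonical reduction system for the reducible case), and as such it is correct in broad strokes. The reducible case is the usual delicate point: you should make explicit that the reducing curves themselves are permuted by a power of $f$ and therefore have bounded length (hence rate $1$), which is what forces $\lambda_1=1$ whenever $f$ is reducible and makes the converse direction clean.

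However, this is \emph{not} the route the paper takes. The paper does not reprove the full theorem; it cites \cite{T88,FLP79} for the statement and then, in Section~8, shows how to obtain only the \emph{top} exponent by a genuinely different mechanism that deliberately avoids the Nielsen--Thurston classification. The ingredients there are: (i) Thurston's asymmetric (Lipschitz) metric $L(x,y)=\log\sup_{\alpha}l_y(\alpha)/l_x(\alpha)$ on Teichm\"uller space, so that $a(n)=L(x_0,f^nx_0)$ is subadditive with limit $\tau$; (ii) the Karlsson--Margulis selection lemma from \cite{KaM99} producing times $n_i$ with $a(n_i)-a(n_i-k)\ge(\tau-\epsilon_i)k$; (iii) the Lenzhen--Rafi--Tao fact \cite{LRT12} that a \emph{finite} curve set $\mu$ computes $L(x_0,\cdot)$ up to an additive error; and (iv) pigeonhole on $\mu$ to extract one curve $\alpha_1$ whose length along the orbit grows exactly like $e^{\tau n}$. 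No boundary $\mathcal{PMF}$, no Wolff--Denjoy, no invariant foliations, no train tracks enter.

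The trade-offs are clear. Your approach delivers the full spectrum $\{\lambda_i\}$ and its algebraic nature, at the cost of importing the structure theory you flag as the ``main obstacle''. The paper's approach is soft and metric, requires no classification, and generalises immediately to ergodic cocycles (random products of mapping classes); its price is that it yields only the top exponent, with the remaining exponents and algebraicity deferred to \cite{H16}. Your final paragraph conflates the two viewpoints: the metric functional/Wolff--Denjoy picture you invoke is closer to Thurston's original fixed-point argument on $\overline{\mathcal T}$, whereas the paper's Section~8 argument works entirely inside $\mathcal T$ via the spectral (Lipschitz) metric and subadditive estimates.
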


This is analogous to a simple statement for linear transformations
$A$ in finite dimensions: given a vector $v$ there is an associated
exponent $\lambda$ (absolute value of an eigenvalue), such that
\[
\lim_{n\rightarrow\infty}\left\Vert A^{n}v\right\Vert ^{1/n}=\lambda.
\]
To spell out the analogy: diffeomorphism $f$ instead of a linear
transformation $A$, a length instead of a norm, and a curve $\alpha$
instead of a vector $v$. Below we will show how to get the top exponent,
even for a random product of homeomorphisms, using our metric ideas
and a lemma in Margulis' and my paper \cite{KaM99}. This is a different
appoach than \cite{Ka14}. To get all the exponents (without their
algebraic nature) requires some additional arguments, see \cite{H16}.

One of the central notions in the present text is that of a Busemann
function or metric functional. This notion appears implicitly in classical
mathematics, with Poisson and Eisenstein, and is by now receognized
by many people as a fundamental tool. In differential geometry, see
the discussion in Yau's survey \cite{Y11}. Busemann functions play
a crucial role in the Cheeger-Gromoll splitting theorem for manifolds
wiht non-degative Ricci curvature. The community of researchers of
non-posiitve curvature also has frequently emplyed Busemann functions.
For eample, it has been noted by several people that the horofunction
boundary (metric compactification) is the right notion when generalizing
Patterson-Sullivan measures, see for eample \cite{CDST18} for a recent
contribution. In my work with Ledrappier, we used this notion without
knowing anything about the geometry of the Cayley graphs, in particular
without any curvature assumption. Related to this, with a view towards
another approach to Gromov's polynomial growth theorem, see \cite{TY16}.
There are many other instances one could mention, nut still, it seems
to me that the notion of a Busmeann function remains a bit off the
mainstrem, instead of taking its natural place dual to geodeiscs. 

A note on terminology. When I had a choice, or need, to introduce
a word for a concept, I sometimes followed Serge Lang's saying that
terminology should (ideally) be functorial with respect to the ideas.
Hence I use the word \emph{metric functional} for a variant of the
the notion of horofunction usually employed and introduced by Gromov
generalizing an older concept due to Busemann in turn extending a
notion in complex analysis (and also from Martin boundary theory).
While some people do not like this, I thought it could be useful to
avoid confusion to have different terms for different concepts, even
when, or precisely because, these are variants of each other. In additon
to being functorial in the ideas, \emph{metric functional} also sounds
more basic and fundamental as a notion than \emph{horofunction} does.
Indeed, the present paper tries to argue for the analogy with the
linear case and the basic importance of the metric concept of horofunctions
or metric functionals.

For the revision of this text I thank the referee, David Fisher, Erwann
Aubry, Thomas Haettel, Massimo Picardello, Marc Peigne and especially
Armando Gutierrez for comments.

\section{Functionals}

\subsection*{Linear theory}

For vector spaces $E$, \emph{lines} 
\[
\gamma:\mathbb{R\mathrm{\rightarrow E}}
\]
are of course fundamental objects, as are their dual objects, the
\emph{linear functionals} 
\[
\phi:E\rightarrow\mathbb{R}.
\]
In the case of normed vector spaces the existence of continuous linear
functionals relies in general on Zorn's lemma via the Hahn-Banach
theorem. It is an abstraction of integrals. The sublevel sets of $\phi$
define half-spaces. The description of these functionals is an important
aspect of the theory, see for example the section entitled \emph{The
search for continuous linear functionals }in \cite{Di81}.

\subsection*{Metric theory}

For metric spaces $X$, \emph{geodesic lines} 
\[
\gamma:\mathbb{R}\rightarrow X
\]
are fundamental. The map $\gamma$ is here an isometric embedding.
(Note that \emph{geodesic} have two meanings: in differential geometry
they are locally distance minimizing, while in metric geometry they
are most often meant to be globally distance minimizing. The concepts
coincide lifted to contractible univeral covering spaces.) Now we
will discuss what the analog of linear functionals should be, that
is, some type of maps 
\[
h:X\rightarrow\mathbb{R}.
\]

\noun{Observation 1: }Let $X$ be a real Hilbert space. Take a vector
$v$ with $\left\Vert v\right\Vert =1$ and consider

\[
\lim_{t\rightarrow\infty}\left\Vert tv-y\right\Vert -\left\Vert tv\right\Vert =\lim_{t\rightarrow\infty}\sqrt{(tv-y,tv-y)}-t=\lim_{t\rightarrow\infty}\frac{(tv-y,tv-y)-t^{2}}{\sqrt{(tv-y,tv-y)}+t}=
\]

\[
=\lim_{t\rightarrow\infty}\frac{t\left(-2(y,v)+(y,y)/t\right)}{t\left(\sqrt{1-2(y,v)/t+(y,y)/t^{2}}+1\right)}=-(y,v).
\]

In this way one can recover the scalar product from the norm, differently
than from the polarization identity.

In an \emph{analytic continution of ideas }as it were, one is then
led to the next observation (which maybe is not how Busemann was thinking
about this):

\noun{Observation 2: }(Busemann) Let $\gamma$ be a geodesic line
(or just a ray $\gamma:\mathbb{R}_{+}\rightarrow X$). Then the following
limit exists:
\[
h_{\gamma}(y)=\lim_{t\rightarrow\infty}d(\gamma(t),y)-d(\gamma(t),\gamma(0)).
\]

The reason for the existence of the limit for each $y$ is that the
sequence in question is bounded from below and monotonically decreasing
(thanks to the triangle inequality), see \cite{BGS85,BrH99}. 
\begin{example*}
The open unit disk of the complex plane admits the Poincare metric,
in its infinitesimal form
\[
ds=\frac{2\left|dz\right|}{1-\left|z\right|^{2}}.
\]
This gives a model for the hyperbolic plane and moreover it is fundamental
in the way that every holomorphic self-map of the disk is a semicontraction
in this metric, this is the content of the Schwarz-Pick lemma. The
Busemann function associated to the ray from 0 to the boundary point
$\zeta$, in other words $\zeta\in\mathbb{C}$ with $\left|\zeta\right|=1$,
is
\[
h_{\zeta}(z)=\log\frac{\left|\zeta-z\right|^{2}}{1-\left|z\right|^{2}}.
\]
These functions appear (in disguise) in the Poisson integral representation
formula and in Eisenstein series. 
\end{example*}
One can take one more step, which will be parallel to the construction
of the Martin boundary in potential theory. This specific metric idea
might have come from Gromov around 1980 (except that he considers
another topology \textendash{} an important point for us here).

Let $(X,d)$ be a metric space (perhaps without the symmetric axiom
for $d$ satisfied, this point is being discussed in \cite{W11} and
\cite{GV12}). Let 
\[
\Phi:X\rightarrow\mathbb{R\mathrm{^{X}}}
\]
be defined via
\[
x\mapsto h_{x}(\cdot):=d(\cdot,x)-d(x_{0},x).
\]
This is a continuous injective map. The functions $h$ and their limits
are called \emph{metric functionals}. In view of Observation 2 Busemann
functions are examples of metric functionals and (easily seen) not
being of the form $h_{x}$, with $x\in X$. Even though geodesics
may not exist, metric functionals always exist. Note that like in
the linear case functionals are normalized to be $0$ at the origin:
$h(x_{0})=0$.

Every horofunction (i.e. uniform limit on bounded subsets of functions
$h_{x}$ as $x$tends to infinity) is a metric functional and every
Busemann function is a metric functional. On the other hand, in general
it is a well-recognized fact that not every horofunction is a Busemann
function (such spaces could perhaps be called non-reflexive) and also
not every Busemann function is a horofunction, some artifical counterexamples
showing this can be thought of: 
\begin{example*}
Take one ray $\left[0,\infty\right]$ that will be geodsic, then add
an infinite number of points at distance $1$ to the point 0 and distance
$2$ to each other. Then at each point $n$ on the ray, connect it
to one of the points around 0 with a geodesic segment of length $n-1/2$.
This way $h_{\gamma}(y)=\lim_{t\rightarrow\infty}d(\gamma(t),y)-d(\gamma(t),\gamma(0))$
still of course converge for each $y$ but not uniformly. Hence the
Busemann function $h_{\gamma}$ is a metric functional but not a horofunction.
\end{example*}
As already stated, to any geodesic ray from the origin there is an
associated metric functional (Busemann fucntion), compare this with
the situation in the linear theory that the fundamental Hahn-Banach
theorem addresses. In the metric category the theory of injective
metric spaces considers when semicontractions (1-Lipschitz maps) defined
on a subset can be extended, see \cite{La13} and references therein.
The real line is injective, which means that for any subset $A$ of
a metric space $B$ and semicontraction $f:A\rightarrow\mathbb{R}$
there is an extension of $f$ to $B\rightarrow\mathbb{R}$ without
increasing the Lipschitz constant, for example
\[
\bar{f}(b):=\sup_{a\in A}\left(f(a)-d(a,b)\right)
\]
or 
\[
\bar{f}(b):=\inf_{a\in A}\left(f(a)+d(a,b)\right),
\]
 It would require a lengthy effort to try to survey all the purposes
horofunctions have served in the past. Two instances can be found
in differential geometry, in non-negative curvature, the Cheeger-Gromoll
theorem, and in non-positive curvature, the Burger-Schroeder-Adams-Ballmann
theorem. In my experience, many people know of one or a few applications,
but few have an overview of all the applications. Other applications
are found below or in papers listed in the bibliography, for example
let us mention a recent Furstenberg-type formula for the drift of
random walks on groups \cite{CLP17} in part building on \cite{KaL06,KaL11}.
It is also the case that the last two decades have seen identifications
and understanding of horofunctions for various classes of metric spaces.

\section{Weak convergence and weak compactness}

\subsection*{Linear theory }

One of the main uses for continuous linear functionals is to define
weak topologies which have compactness properties even when the vector
space is of infinite dimension (the Banach-Aloglu theorem), see \cite{L02}.

\subsection*{Metric theory}

We will now discuss how the definition of metric functionals on a
metric space will provide the metric space with a weak topology for
which the closure is compact. There have been other more specific
efforts to achieve this in special situations, maybe the first one
for trees can be found in Margulis paper \cite{Ma81}, see also \cite{CSW93}
for another approach, \cite{Mo06} for a discussion in non-positive
curvature, and then \cite{GV12} for the general method taken here.

Let $X$ be a set. By a \emph{hemi-metric} on \emph{X} we mean be
a function 
\[
d:X\times X\rightarrow\mathbb{R}
\]
such that $d(x,y)\leq d(x,z)+d(z,y)$ for every $x,y,z\in X$ and
$d(x,y)=0$ if and only if $x=y.$ (The latter axiom can be satisfied
by passing to a quotient space.) In other words we do not insist that
$d$ is symmetric (one could symmetrize it), nor positive. For more
discussion about such metrics, see \cite{GV12,W11}. One way to proceed
is to consider 
\[
D(x,y):=\max\left\{ d(x,y),d(y,x)\right\} 
\]
which clearly is symmetric, but also positive, see \cite{GV12}, so
an honest metric. One can take the topology on $X$ from $D$.

For a weak topology there are a couple of alternative definitions,
but we proceed as follows. As defined in the previous section, let
\[
\Phi:X\rightarrow\mathbb{R\mathrm{^{X}}}
\]
defined via
\[
x\mapsto h_{x}(\cdot):=d(\cdot,x)-d(x_{0},x).
\]
This is a continuous injective map. By the triangle inequality we
note that 
\[
-d(x_{0},y)\leq h_{x}(y)\leq d(y,x_{0}).
\]
A consequence of this in view of Tychonoff's theorem is that with
the pointwise (=product) topology the closure $\overline{\Phi(X)}$
is compact. In general this is not a compactification in the strict
and standard sense that the space sits as an open dense subset in
a compact Hausdorff space, but it is convenient to still call it a
compactification, for a discussion about this terminology see \cite[6.5]{Si15}.
\begin{example*}
This has by now been studied for a number of classes of metric spaces:
non-positively curved spaces \cite{BGS85,BrH99}, Gromov hyperbolic
spaces (\cite{BrH99} or a more recent and closer to our cosideration
is \cite{MT18}), Banach spaces \cite{W07,Gu17,Gu18}, Tecihmuller
spaces (see \cite{Ka14} for references in particular to Walsh), Hilbert
metrics \cite{W11,W18,LN12}, Roller boundary of CAT(0)-cube complex
(Bader, Guralnick, Finkelshtein, unpublished), and symmetric spaces
of noncompact type equipped with Finsler metrics \cite{KL18}.
\end{example*}
Let me try to introduce some terminology. We call $\overline{\Phi(X)}$
the \emph{metric compactification }(the term was also coined for proper
geodesic metric spaces by Rieffel in a paper on operator algebras
and noncommutative geometry) and denote it by $\overline{X}$, even
though this is a bit abusive, since the topology of $X$ itself might
be different. The closure that is usually considered starting from
Gromov, see \cite{BGS85,BrH99}, is to take the topology of unform
convergence on bounded sets (note that uniform convergence on compact
sets is in the present context equivalent to our pointwise convergence),
and following \cite{BrH99} we call this the \emph{horofunction bordification}.
For proper geodesic spaces the two notions coincide.
\begin{example*}
A simple useful example is the following metric space, which I learnt
from Uri Bader. Consider longer and longer finite closed intervals
$\left[0,n\right]$ all glued to a point $x_{0}$ a the point $0.$
This becomes a countable (metric) tree which is unbounded but contain
no infinite geodesic ray. By virtue of being a tree it is CAT(0).
It is easy to directly verify that there are no limits in terms of
the topology of uniform convergence on bounded subsets. Alternatively,
one can see this less directly since for CAT(0) spaces every horofunction
is a Busemann function, but there are no (infinite) geodesic rays.
So there are no horofunctions in the usual sense, the horofunction
bordification is empty, no points are added. The metric compactification
also does not add any new points, but new topology is such that every
unbounded subsequence converges to $h_{x_{0}}$. This shows in particular
that there are minor inaccuracies in \cite[8.15 exercises]{BrH99}
and \cite[remark 14]{GV12}.
\end{example*}
Some more terminology: we call as said above the elements in $\overline{\Phi(X)}$
\emph{metric functionals}. We call \emph{horofunctions} those that
arise from unbounded subsequences via the strong topology, that of
uniform convergence on bounded subsets. The metric functionals coming
from geodesic rays, via Busemann's observation above are called \emph{Busemann
functions. }As observed above, not every Busemann function is a horofunction
and vice versa.

In my opinion these examples show the need for a precise and new terminology,
instead of just using the word \emph{horofunction} for all these concepts,
and let its precise definition depend on the context. 

Moreover, we attempt to distinguish further between various classes
of metric functionals. We have \emph{finite metric functionals} and
\emph{metric functionals at infinity. }The latter are those functions
which has $-\infty$ as its infimum. The former are hence those metric
functionals that have an finite infimum. Busemann functions are always
at infinity. The tree example above shows that even an unboudned sequence
can converge to a finite metric functional. (What can easily be shown
though is that every metric functional at infinity can only be reached
via an unbounded seuqnce). An example of a metric functional at infinity
that has finite infimum is the\emph{ $h_{\infty,0}\equiv0$} in the
Hilbert space example in the next section.

One can have metrically improper metric functionals with infinite
infimum. For the finite metric functionals we suggest moreover that
the once coming from points $x\in X$, $h_{x}$ are \emph{internal
(finite) metric functionals} and the complement of these are the \emph{exotic
(finite) metric functionals}. Examples of the latter are provided
by the Hilbert space proposition in the next section (their existence
is needed since we claim to obtain a compact space in which the Hilbert
space sits). For related division of metric functionals in the context
of Gromov hyperbolic spaces, see \cite{MT18}.
\begin{example*}
Here is a simple illustration of how the notion of metric functionals
interacts with Gromov hyperbolicity. Let $h$ be a metric functional
(Busemann function) defined by a sequence $y_{m}$ belonging to a
geodesic ray from $x_{0}$. Assume that $x_{n}$ is a sequence such
that $h(x_{n})<0$ and $x_{n}\rightarrow\infty$. Then 
\[
2\left(x_{n},y_{m}\right)=d(x_{n},x_{0})+d(y_{m},x_{0})-d(x_{n},y_{m})>d(x_{n},x_{0})
\]
for any $n$ with $m$ sufficently large in view of $0>h(x_{n})=\lim_{m\rightarrow\infty}d(y_{m},x_{n})-d(y_{m},x_{0}).$
So for each $n$ we can find a sufficently large $m$ such this inequality
holds, and along this subsequence $\left(x_{n},y_{m}\right)\rightarrow\infty$
showing that the two sequences hence converge to one and the same
point of the Gromov boundary. For more on metric functionals for (non-proper)
Gromov hyperbolic spaces we refer to \cite{MT18}.
\end{example*}

\section{Examples: Banach spaces}

\subsection*{Linear theory}

The set of continuous linear functionals form a new normed vector
space, called the\emph{ dual space, }with norm
\[
\left\Vert f\right\Vert =\sup_{v\neq0}\frac{\left|f(v)\right|}{\left\Vert v\right\Vert }.
\]

\subsection*{Metric theory}

The weak compactification and the horofunctions of Banach spaces introduces
a new take on a part of classical functional analysis, especially
as they have a similar role as continuous linear functionals. Two
features stand out, first, the existence of these new functionals
do not need any Hahn-Banach theorem which in general is based on Zorn's
lemma, second, the horofunctions are always convex and sometimes linear.
Horofunctions interpolate between the norm ($h_{0}(x)=\left\Vert x\right\Vert $)
and linear functionals. More precise statements now follow.
\begin{prop}
Let $E$ be a normed vector space. Every function $h\in\overline{E}$
is convex, that is, for any $x,y\in X$ one has
\[
h(\frac{x+y}{2})\leq\frac{1}{2}h(x)+\frac{1}{2}h(y).
\]
\end{prop}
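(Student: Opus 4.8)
The statement asserts that every element $h \in \overline{E}$ of the metric compactification of a normed vector space is convex.

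The plan is to first establish convexity for the \emph{internal} metric functionals $h_x$, and then show that convexity is preserved under the passage to pointwise limits. For the first step, I would fix $x \in E$ and compute directly. By definition, $h_x(\cdot) = \|\cdot - x\| - \|x_0 - x\|$, and since the constant $\|x_0 - x\|$ does not affect convexity, it suffices to check that $y \mapsto \|y - x\|$ is convex. This is immediate from the triangle inequality and homogeneity of the norm: for any $y, z \in E$,
\[
\left\| \frac{y+z}{2} - x \right\| = \left\| \frac{(y-x)+(z-x)}{2} \right\| \leq \frac{1}{2}\|y - x\| + \frac{1}{2}\|z - x\|.
\]
Thus each $h_x$ satisfies the midpoint convexity inequality in the statement.

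The second step is to observe that the defining inequality is \emph{closed} under pointwise convergence. An arbitrary $h \in \overline{E} = \overline{\Phi(E)}$ is, by construction, a limit in the product (pointwise) topology of functions of the form $h_{x}$. Fix $x, y \in E$ and set $z = \frac{x+y}{2}$. Evaluating the net (or sequence) of approximating internal functionals at the three points $x$, $y$, and $z$ simultaneously, each approximant satisfies the midpoint inequality by the first step. Passing to the limit at each of the three fixed points separately---which is exactly what pointwise convergence grants us---the inequality $h(z) \leq \frac{1}{2}h(x) + \frac{1}{2}h(y)$ survives, since non-strict inequalities between finitely many coordinate evaluations are preserved under pointwise limits.

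I do not anticipate a serious obstacle here, as both steps are elementary; the only point requiring minor care is that the product topology controls precisely the pointwise behavior we need, so evaluating at the three fixed points $x$, $y$, $\frac{x+y}{2}$ is legitimate even though the convergence is not uniform. One should also note a typographical slip in the statement (it writes $x, y \in X$ where $E$ is meant), but this does not affect the argument. It is worth remarking that the same reasoning shows more: since the inequality holds for dyadic rational convex combinations by iteration, and each $h$ is continuous (being a pointwise limit of $1$-Lipschitz functions, hence itself $1$-Lipschitz), full convexity along all of $[0,1]$ follows, though the midpoint form stated is all that is claimed.
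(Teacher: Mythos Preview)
Your proof is correct and follows essentially the same route as the paper: establish midpoint convexity for the internal functionals $h_z$ via the triangle inequality, then pass to pointwise limits. The paper compresses your second step into a single sentence (``This inequality passes to any limit point of such $h_z$''), whereas you spell out why pointwise convergence at the three points $x$, $y$, $(x+y)/2$ suffices; your additional remarks on the typo and on upgrading midpoint convexity to full convexity via the $1$-Lipschitz property are accurate but go beyond what the paper records.
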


\begin{proof}
Note that for $z\in E$ one has
\[
h_{z}((x+y)/2)=\left\Vert (x+y)/2-z\right\Vert -\left\Vert z\right\Vert =\frac{1}{2}\left\Vert x-z+y-z\right\Vert -\left\Vert z\right\Vert 
\]

\[
\leq\frac{1}{2}\left\Vert x-z\right\Vert +\frac{1}{2}\left\Vert y-z\right\Vert -\left\Vert z\right\Vert =\frac{1}{2}h_{z}(x)+\frac{1}{2}h_{z}(y).
\]
This inequality passes to any limit point of such $h_{z}$. 
\end{proof}
Furthermore, as Busemann noticed in the context of geodesic spaces,
any vector $v$ gives rise to a horofunction via
\[
h_{\infty v}(x)=\lim_{t\rightarrow\infty}\left\Vert x-tv\right\Vert -t\left\Vert v\right\Vert .
\]
Often this is a norm one linear functional, it happens precisely when
$v/\left\Vert v\right\Vert $ is a smooth point of the unit sphere
\cite{W07,Gu17,Gu18}.

Note that in this case one has in addition to the convexity that $h_{\infty v}(\lambda x)=\lambda h_{\infty v}(x)$
for scalars $\lambda,$ and so $h_{\infty v}$ is a homogeneous sublinear
function. By the Hahn-Banach theorem we have a norm 1 linear functional
$\psi$ associated to unit vector $v$ for which $\psi(v)=1$ and
such that $\psi\leq h_{\infty v}.$ 
\begin{prop}
\label{prop: Hilbert space}Let $H$ be a real Hilbert space with
scalar product $(\cdot,\cdot)$. The elements of $\overline{H}$ are
parametrized by $0<r<\infty$ and vectors $v\in H$ with $\left\Vert v\right\Vert \leq1$,
and the element corresponding to $r=0$, $v=0$. When $\left\Vert v\right\Vert =1$,
\[
h_{r,v}(y)=\left\Vert y-rv\right\Vert -r
\]
and for general $v$
\[
h_{r,v}(y)=\sqrt{\left\Vert y\right\Vert ^{2}-2(y,rv)+r^{2}}-r.
\]
In addition there is $h_{0}(y):=h_{0,0}(y)=\left\Vert y\right\Vert $
and the $r=\infty$ cases
\[
h_{\infty,v}(y)=-(y,v)
\]
where $v\in H$ with $\left\Vert v\right\Vert \mbox{\ensuremath{\leq}1.}$
A sequence $(t_{i},v_{i})$ with $\left\Vert v_{i}\right\Vert =1$
converges to $h_{r,v}$ iff $t_{i}\rightarrow r\in(0,\infty]$ and
$v_{i}\rightarrow v$ in the standard weak topology, or to $h_{0}$
iff $t_{i}\rightarrow0$.
\end{prop}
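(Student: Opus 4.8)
The plan is to fix the basepoint $x_0=0$, so that the internal functionals are $h_x(y)=\|y-x\|-\|x\|$, and to drive everything from the single algebraic identity
\[
h_x(y)^2+2\|x\|\,h_x(y)=\|y-x\|^2-\|x\|^2=\|y\|^2-2(y,x),
\]
which holds since $h_x(y)+\|x\|=\|y-x\|$. Writing $x=tv$ with $t=\|x\|$ and $\|v\|=1$ for $x\neq0$, one has $h_x(y)=\sqrt{\|y\|^2-2t(y,v)+t^2}-t$, and all the claimed functionals will arise as pointwise limits of such expressions. The two analytic tools I would invoke are that bounded subsets of $H$ are weakly compact (reflexivity of Hilbert space) and that, in the infinite-dimensional case, the weak closure of the unit sphere is the entire closed unit ball.

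First I would verify that each listed function genuinely lies in $\overline{H}$. The origin functional $h_0=h_{x_0}$ is already internal. For $0<r<\infty$ and any $v$ with $\|v\|\le1$, choose unit vectors $v_i\rightharpoonup v$ (possible precisely because $v$ lies in the weak closure of the sphere) and put $x_i=rv_i$; then $\|x_i\|=r$ and $(y,x_i)\to r(y,v)$ for each fixed $y$, so $h_{x_i}(y)\to\sqrt{\|y\|^2-2r(y,v)+r^2}-r=h_{r,v}(y)$, which collapses to $\|y-rv\|-r$ when $\|v\|=1$. For the functionals at infinity I would take $t_i\to\infty$ with unit $v_i\rightharpoonup v$, set $x_i=t_iv_i$, and rationalise: since $h_{x_i}(y)=(\|y\|^2-2t_i(y,v_i))/(\sqrt{\|y\|^2-2t_i(y,v_i)+t_i^2}+t_i)$, dividing numerator and denominator by $t_i$ sends them to $-2(y,v)$ and $2$ respectively, giving $h_{x_i}(y)\to-(y,v)=h_{\infty,v}(y)$.

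Conversely, to show nothing else occurs, I would take an arbitrary $h\in\overline{H}$, realised as a net limit $h_{x_\alpha}\to h$, and pass to a subnet along which $t_\alpha=\|x_\alpha\|\to s\in[0,\infty]$. If $s=0$ then $x_\alpha\to0$ in norm and $h=h_0$. If $0<s<\infty$, the identity shows $h(y)^2+2s\,h(y)=\|y\|^2-2\lim_\alpha(y,x_\alpha)$, so $(y,w):=\lim_\alpha(y,x_\alpha)$ exists for every $y$ and is a bounded linear functional with $\|w\|\le s$; Riesz representation produces $w\in H$ with $x_\alpha\rightharpoonup w$, and because $h(y)+s=\lim\|y-x_\alpha\|\ge0$ I may take the positive root to get $h(y)=\sqrt{\|y\|^2-2(y,w)+s^2}-s=h_{s,w/s}(y)$ with $\|w/s\|\le1$. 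If $s=\infty$, a further subnet yields $v_\alpha=x_\alpha/t_\alpha\rightharpoonup v$ with $\|v\|\le1$, and the rationalisation above gives $h=h_{\infty,v}$.

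Finally, for the convergence criterion I would first record that the parametrisation is injective: solving the identity recovers $r$ from $h$ (for instance $h(y)+h(-y)$ vanishes identically exactly in the linear case $r=\infty$ and otherwise determines $r$) and then determines $w=rv$, hence $v$. The forward implication then follows by the standard subsequence argument, since every subsequence of $t_i$ has a sub-subsequence converging in $[0,\infty]$ and every bounded subsequence of $v_i$ has a weakly convergent sub-subsequence (weak sequential compactness of the ball); each sub-subsequential limit is some $h_{s,v'}$, and injectivity forces $s=r$, $v'=v$, so $t_i\to r$ and $v_i\rightharpoonup v$. The reverse implication is just the limit computations already made. I expect the main obstacle to be this completeness half: one must allow that $\|x_\alpha\|$ need not converge, and the essential point is that the elementary identity forces the inner products $(y,x_\alpha)$ to converge for every $y$ in the finite-$s$ regime, which is what manufactures the weak limit $w$ and simultaneously fixes the sign of the square root; the infinite-$s$ case is softer once rationalised but still rests on weak compactness of the unit ball.
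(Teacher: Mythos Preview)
Your proof is correct and follows essentially the same route as the paper: write points as $tv$ with $\|v\|=1$, invoke weak compactness of the unit ball to pass to subnets where $t_i\to s\in[0,\infty]$ and $v_i\rightharpoonup v$, and compute the limit in the three regimes $s=0$, $0<s<\infty$, $s=\infty$; distinctness of the resulting functions then forces the stated convergence criterion.

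Two points where you are more careful than the paper, and rightly so. First, you explicitly argue the existence direction, observing that the ``exotic'' functionals $h_{r,v}$ with $\|v\|<1$ arise only because the weak closure of the unit sphere is the whole closed ball; this tacitly requires $H$ to be infinite-dimensional, a hypothesis the paper leaves implicit. Second, your use of the identity $h_x(y)^2+2\|x\|\,h_x(y)=\|y\|^2-2(y,x)$ to manufacture the weak limit $w$ directly (rather than first extracting a weakly convergent subnet of the $v_\alpha$) is a tidy way to see that the limit $\lim_\alpha(y,x_\alpha)$ is forced for every $y$ once $h$ and $s$ are fixed, and simultaneously pins down the sign of the square root. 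The paper's argument is the same in spirit but simply asserts ``we clearly get'' the limiting formula. Your injectivity sketch via $h(y)+h(-y)$ is a little thin---it distinguishes $r=\infty$ from finite $r$ but does not by itself separate different finite pairs $(r,v)$---though the paper is no more explicit here, and the claim is indeed routine once one evaluates, say, the infimum of $h_{r,v}$ and its minimiser.
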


\begin{proof}
In order to identify the closure we look at vectors $tv\in H$ where
we have normalized so that $\left\Vert v\right\Vert =1$. By weak
compactness we may assume that a sequence $t_{i}v_{i}$ (or net) clusters
at some radius $r$ and some limit vector $v$ in the weak topology
with $\left\Vert v\right\Vert \leq1$. In the case $r<\infty$ we
clearly get the functions 
\[
h_{r,v}(y)=\sqrt{r^{2}(1-\left\Vert v\right\Vert ^{2})+\left\Vert y-rv\right\Vert ^{2}}-r,
\]
which after developing the norms gives the functions in the proposition.
Note that in case $t\rightarrow0$ the function is just $h_{0}$ independently
of $v$. 

In the case $t_{i}\rightarrow\infty$ we have the following calculation
\[
h_{\infty,v}(y)=\lim_{i\rightarrow\infty}\sqrt{(t_{i}v_{i}-y,t_{i}v_{i}-y)}-t=\lim_{i\rightarrow\infty}\frac{(t_{i}v_{i}-y,t_{i}v_{i}-y)-t^{2}}{\sqrt{(t_{i}v_{i}-y,t_{i}v_{i}-y)}+t}=
\]

\[
=\lim_{i\rightarrow\infty}\frac{t_{i}\left(-2(y,v)+(y,y)/t_{i}\right)}{t_{i}\left(\sqrt{1-2(y,v)/t_{i}+(y,y)/t_{i}^{2}}+1\right)}=-(y,v).
\]
It is rather immediate that the functions described are all distinct
which means that for convergent sequences both $t_{i}$ and $v_{i}$
must converge (with the trivial exception of when $t_{i}\rightarrow0$).
\end{proof}
We have in this way compactified Hilbert spaces. To illustrate the
relation with the (linear) weak topology consider an ON-basis $\left\{ e_{n}\right\} $.
It is a first example of the weak topology that $e_{n}\rightharpoonup0$
weakly. Likewise does the sequence $\lambda_{n}e_{n}$ for any sequence
of scalars $0<\lambda_{n}<1$. In $\overline{H}$ it is true that
$e_{n}\rightarrow h_{1,0}$, but $\lambda_{n}e_{n}$ does not necessarily
converge. On the other hand $n\cdot e_{1}$ does not converge weakly
as $n\rightarrow\infty$ but $n\cdot e_{1}\rightarrow h_{\infty,e_{1}}(\cdot)=-(\cdot,e_{1})$
in $\overline{H}$.

For $L^{p}$ spaces we refer to \cite{W07,Gu17,Gu18,Gu19}. An interesting
detail that Gutierrez showed is that the function identically equal
to zero is not a metric functional for $\ell^{1}$. He aslo observed
how a famous fixed point free example of Alpsach must fix a metric
functional.

\section{Basic spectral notions}

\subsection*{Linear theory}

Let $E$ be a normed vector space and $A:E\rightarrow E$ a bounded
(or continuous) linear map (operator). One defines the \emph{operator
norm}
\[
\left\Vert A\right\Vert =\sup_{v\neq0}\frac{\left\Vert Av\right\Vert }{\left\Vert v\right\Vert }.
\]
A basic notion is the\emph{ spectrum} and that it is a closed non-empty
set of complex numbers. As Beurling and Gelfand observed its radius
can be calculated by 
\[
\rho(A)=\lim_{n\rightarrow\infty}\left\Vert A^{n}\right\Vert ^{1/n}
\]
called the \emph{spectral radius} of $A$. (The existence of the limit
comes from a simple fact, known as Fekete lemma, in view of the submultiplicative
property of the norm, see \cite[17.1]{L02}). One has the obvious
inequality
\[
\rho(A)\leq\left\Vert A\right\Vert .
\]
In many important cases there is in fact an equality here, such as
for normal operators which includes all unitary and self-adjoint operators.

For a given vector $v$ one may ask for the existence of 
\[
\lim_{n\rightarrow\infty}\left\Vert A^{n}v\right\Vert ^{1/n}.
\]
Such considerations are called \emph{local spectral theory. }In infinite
dimensions this limit may not exist when the spectral theory fails.
In finite dimensions the limit exists as is clear from the Jordan
normal form. A counterexample can be given by the of $\ell^{2}$ sequence
and $A$ is a combination of a shift and a diagonal operator, having
two exponents each alternating in longer and longer stretches, making
the bahaviour seem different for various periods of n. See for example
\cite{Sc91} for details.

When $A^{n}$ is replaced by a random product of operators, an ergodic
cocycle, then Oseledets multiplicative ergodic theorem asserts that
these limits, called Lyapunov expoments, exist a.e. 

\subsection*{Metric theory}

Let $(X,d)$ be a metric space and $f:X\rightarrow X$ a semicontraction
(i.e. a 1-Lipschitz map). One defines the \emph{minimal displacement
\[
d(f)=\inf_{x}d(x,f(x)).
\]
}Like in hyperbolic geometry, or for nonpositively curved spaces \cite{BGS85},
one can classify semicontractions of a metric space as follows:
\begin{itemize}
\item \emph{elliptic} if $d(f)=0$ and the infinum is attained, i.e. there
is a fixed point
\item \emph{hyperbolic} if $d(f)>0$ and the infimum is attained, or
\item \emph{parabolic} if the minimum is not attained.
\end{itemize}
Usually the parabolic maps are the more complicated. It might also
be useful to divide semicontractions according to whether all orbits
are bounded, all orbits are unbounded, and in the latter case whether
all orbits tends to infinity. For example, a circle rotation is hyperbolic
and bounded. In this general context let me again recommend \cite{G18}
for examples and a simpler proof of Calka's theorem, which assserts
that for proper metric spaces unbounded orbits necessarly tend to
infinity. 

Another basic asociated number is the \emph{translation number} (or
\emph{drift} or \emph{escape rate})
\[
\ensuremath{\tau(f)=\lim_{n\rightarrow\infty}\frac{1}{n}d(x,f^{n}(x))}.
\]
Notice that this number is independent of $x$ because by the $1$-Lipschitz
property any two orbits stay on bounded distance from each other.
This number exists by the Fekete lemma in view of the subadditivity
coming from the triangle inequality and the 1-Lipschitz property.
It also has the tracial property: $\tau(fg)=\tau(gf)$ as is simple
to see.

One has the obvious inequality
\[
\tau(f)\leq d(f).
\]
In important cases one has equality, especially under non-positive
curvature: for isometries see \cite{BGS85} and the most general version
see \cite{GV12}. In view of that holomorphic maps preseve Kobayashi
pseudo-distances, one can study the corresponding invariants and ask
when equality holds:
\begin{problem*}
For holomorphic self-maps $f$, when do we have equality $\tau(f)=d(f)$
in the Kobayashi pseudo-distance?
\end{problem*}
This has recently been studied by Andrew Zimmer. This is analogous
to operators when the spectral radius equals the norm.

The following fact is a spectral principle \cite{Ka01} that is analogous
to the discussion about the local spectral theory. Note that in contrast
to the linear case it holds in all situations. The first statement
can also be thought of as a weak spectral theorem or weak Jordan normal
form. (For comparison, there is a stronger version in \cite{GV12}
for a restricted class of metric spaces.)
\begin{thm}
\label{thm:metric spectral}\emph{(Metric spectral principle \cite{Ka01})
}Given a semicontraction $f:(X,d)\rightarrow(X,d)$ with drift $\tau.$
Then there exists $h\in\overline{X}$ such that
\[
h(f^{k}(x_{0}))\leq-\tau k
\]
for all $k>0,$ and for any $x\in X$,
\[
\lim_{k\rightarrow\infty}-\frac{1}{k}h(f^{k}(x))=\tau.
\]
\end{thm}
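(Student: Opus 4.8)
The plan is to produce the metric functional $h$ as a limit of the normalized functionals $h_{f^{k}(x_{0})}$ along a well-chosen subsequence, exploiting the compactness of $\overline{\Phi(X)}$ established in the previous section. First I would record the basic displacement estimate. Writing $a_{k} = d(x_{0}, f^{k}(x_{0}))$, the triangle inequality together with the $1$-Lipschitz property gives subadditivity $a_{m+k} \leq a_{m} + a_{k}$, so $a_{k}/k \to \tau$ by Fekete's lemma. The key algebraic observation is that for the internal functional $h_{f^{n}(x_{0})}$ one has, for any $k < n$,
\[
h_{f^{n}(x_{0})}(f^{k}(x_{0})) = d(f^{k}(x_{0}), f^{n}(x_{0})) - d(x_{0}, f^{n}(x_{0})) \leq a_{n-k} - a_{n},
\]
using $d(f^{k}(x_{0}), f^{n}(x_{0})) \leq d(x_{0}, f^{n-k}(x_{0})) = a_{n-k}$ by the $1$-Lipschitz property. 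Since $a_{n} - a_{n-k} \geq \tau k$ is essentially the content of subadditivity (the increments dominate the asymptotic slope), this gives $h_{f^{n}(x_{0})}(f^{k}(x_{0})) \leq -\tau k + o(1)$ type bounds for the prelimit objects.

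Next I would pass to the limit. By compactness of $\overline{X}$ in the product topology, the sequence $h_{f^{n}(x_{0})}$ has a cluster point $h \in \overline{X}$; choosing a subsequence $n_{j} \to \infty$ along which it converges pointwise, the displayed inequality yields $h(f^{k}(x_{0})) \leq -\tau k$ for every fixed $k > 0$, since $a_{n_{j}} - a_{n_{j}-k} \to \tau k$ is not quite automatic and this is where care is needed. The honest way to get the clean bound $h(f^{k}(x_{0})) \leq -\tau k$ rather than a bound with an error term is to choose the subsequence more cleverly: one selects indices $n$ at which the excess $a_{n} - \tau n$ is small, i.e. near the infimum of $a_{n} - \tau n$ (which is bounded below because $a_{n} \geq \tau n$). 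Along such indices $a_{n-k} - a_{n} \leq -\tau k + (a_{n-k} - \tau(n-k)) - (a_{n} - \tau n) \leq -\tau k + \varepsilon$, and by a diagonal choice one can push $\varepsilon$ to $0$ while $k$ ranges over all positive integers. This subsequence-selection is the main obstacle: one must simultaneously control the inequality for all $k$ and ensure the limit functional is a single fixed $h$.

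For the second assertion, the lower asymptotic bound, I would argue as follows. The inequality $h(f^{k}(x_{0})) \leq -\tau k$ already gives $\liminf_{k} -\tfrac{1}{k} h(f^{k}(x_{0})) \geq \tau$ along the base orbit. For the matching upper bound and for general starting point $x$, I would use that every metric functional is $1$-Lipschitz (being a pointwise limit of the $1$-Lipschitz functions $h_{z}$), so $h(f^{k}(x)) \geq h(f^{k}(x_{0})) - d(f^{k}(x), f^{k}(x_{0})) \geq h(f^{k}(x_{0})) - d(x, x_{0})$; dividing by $-k$ and using $h(f^{k}(x_{0})) \geq -a_{k}$ (again from the $1$-Lipschitz bound $h \geq -d(\cdot, x_{0})$ normalized) gives $-\tfrac{1}{k} h(f^{k}(x)) \leq \tfrac{1}{k} a_{k} + \tfrac{1}{k} d(x, x_{0}) \to \tau$. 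Combining the two bounds yields the stated limit $\lim_{k} -\tfrac{1}{k} h(f^{k}(x)) = \tau$ uniformly in the choice of $x$, since the $1$-Lipschitz property makes the dependence on $x$ contribute only a vanishing $O(1/k)$ term.
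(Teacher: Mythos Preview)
Your overall architecture matches the paper's proof: derive $h_{f^{n}(x_{0})}(f^{k}(x_{0}))\le a_{n-k}-a_{n}$ from the semicontraction property, choose a good subsequence of $n$'s, pass to a cluster point $h\in\overline{X}$ by compactness, and then get the matching lower bound from $h(y)\ge -d(x_{0},y)$ together with the $1$-Lipschitz stability of orbits. The second half of your argument is essentially identical to the paper's.

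The gap is precisely in the subsequence selection, and your proposed fix goes in the wrong direction. You suggest choosing $n$ with $a_{n}-\tau n$ close to its infimum and then assert
\[
(a_{n-k}-\tau(n-k))-(a_{n}-\tau n)\le \varepsilon.
\]
But near-infimum control of $a_{n}-\tau n$ gives you only the \emph{opposite} inequality: since $a_{n-k}-\tau(n-k)\ge \inf_{m}(a_{m}-\tau m)$, you obtain $(a_{n-k}-\tau(n-k))-(a_{n}-\tau n)\ge -\varepsilon$, hence $a_{n-k}-a_{n}\ge -\tau k-\varepsilon$, which is useless for bounding $h_{f^{n}(x_{0})}(f^{k}(x_{0}))$ from above. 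There is no a priori upper bound on $a_{n-k}-\tau(n-k)$, so your displayed inequality is unjustified.

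The paper's device is the mirror image of yours: fix $\epsilon_{i}\searrow 0$, set $b_{i}(n)=a_{n}-(\tau-\epsilon_{i})n$, observe that $b_{i}(n)\to+\infty$ (because $a_{n}\ge \tau n$ forces $b_{i}(n)\ge \epsilon_{i}n$), and take $n_{i}$ to be a \emph{record maximum}, i.e.\ $b_{i}(n_{i})>b_{i}(m)$ for all $m<n_{i}$. Then for every $k<n_{i}$ one has $b_{i}(n_{i}-k)<b_{i}(n_{i})$, which rearranges to $a_{n_{i}-k}-a_{n_{i}}\le -(\tau-\epsilon_{i})k$, valid simultaneously for all $k$. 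A single limit point of $h_{f^{n_{i}}(x_{0})}$ then satisfies $h(f^{k}(x_{0}))\le -\tau k$ for every $k$. The perturbation from $\tau$ to $\tau-\epsilon_{i}$ is exactly what guarantees unboundedness of $b_{i}$ and hence infinitely many record times; working with $a_{n}-\tau n$ directly, as you do, provides no such guarantee.
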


\begin{proof}
Given a sequence $\epsilon_{i}\searrow0$ we set $b_{i}(n)=d(x_{0},f^{n}(x_{0}))-(l-\epsilon_{i})n.$
Since these numbers are unbounded in $n$ for each fixed $i$, we
can find a subsequence such that $b_{i}(n_{i})>b_{i}(m)$ for any
$m<n_{i}$. We have for any $k\geq1$ and $i$ that 
\[
d(f^{k}(x_{0}),f^{n_{i}}(x_{0}))-d(x_{0},f^{n_{i}}(x_{0}))
\]
\[
\leq d(x_{0},f^{n_{i}-k}x_{0})-d(x_{0},f^{n_{i}}x_{0})
\]
\[
=b_{i}(n_{i}-k)+(l-\epsilon_{i})(n_{i}-k)-b_{i}(n_{i})-(l-\epsilon_{i})n_{i}
\]
\[
\leq-(l-\epsilon_{i})k.
\]
By compactness, there is a limit point $h$ of the sequence $d(\cdot,f^{n_{i}}(x_{0}))-d(x_{0},f^{n_{i}}(x_{0}))$
in $\overline{X}$. Passing to the limit in the above inequality gives
\[
h(f^{k}(x_{0}))\leq-lk
\]
for all $k>0.$ Finally, the triangle inequality
\[
d(x,f^{k}(x))+d(f^{k}(x),z)\geq d(x,z)
\]
implies that 
\[
h(f^{k}(x_{0}))\geq-d(x_{0},f^{k}(x_{0})).
\]
From this the second statement in the theorem follows in view of that
changing $x_{0}$ to $x$ only is a bounded change since $f$ is $1$-Lipschitz:
\[
\left|d(x_{0},f^{k}(x))-d(x_{0},f^{k}(x_{0}))\right|\leq\max\left\{ d(f^{k}(x),f^{k}(x_{0})),d(f^{k}(x_{0}),f^{k}(x))\right\} 
\]
\[
\leq\max\left\{ d(x,x_{0}),d(x_{0},x)\right\} .
\]
\end{proof}
\begin{example*}
The classical instance of this is the Wolff-Denjoy theorem in complex
analysis. This is thanks to Pick's version of the Schwarz lemma which
asserts that every holomorphic map of the unit disk to itself is 1-Lipschitz
with respect to the Poincare metric $\rho$. It says that given a
holomorphic self-map of the disk, either there is a fixed point or
there is a point on the boundary circle which attracts every orbit.
From basic hyperbolic geometry one can deduce this from our theorem.
Wolff considered also horodisks, but may not have discussed lengths
$\tau,$ which here equals $\inf_{z\in D}\rho(z,f(z)),$ as folows
for example from \cite{GV12}.
\end{example*}
In the isometry case, in the same way, looking at times for which
the orbit is closer to the origin than all future orbit points, one
can show that there exists a metric functional $h$ such that 
\[
h(f^{-n}x_{0})\geq\tau_{f^{-1}}\cdot n
\]
for all $n\geq1$.

\section{Application: Extensions of the mean ergodic theorem}

In 1931 in response to a famous hypothesis in statistical mechanics,
von Neumann used spectral theory to establish that for unitary operators
$U$,
\[
\frac{1}{n}\sum_{k=0}^{n-1}U^{k}g\rightarrow Pg
\]
where $P$ is the projection operator onto the $U$ invariant elements
in the Hilbert space in question. Carleman showed this independently
at the same time (or before), and a nice proof of a more general statement
(for $U$ with $\left\Vert U\right\Vert \leq1$) was found by F. Riesz
inspired by Carleman's method. Such convergence statement is known
not to hold in general for all Banach spaces, in the sense that there
is no strong convergence of the average. On the other hand, let $f(w)=Uw+v$,
then we have 
\[
f^{n}(0)=\sum_{k=0}^{n-1}U^{k}v.
\]
If $\left\Vert U\right\Vert \leq1$, then $f$ is semi-contractive
and Theorem \ref{thm:metric spectral} applies, and it does so for
\emph{any} Banach space.

In other words the theorem is weak enough to always hold. On the other
hand when the situation is better, for example that we are studying
transformation of a Hilbert space, then the weak convergence can be
upgraded to stronger statement thanks to knowledge about the metric
functionals. Here is an example:

Let $U$ and $f$ be as above acting on a real Hilbert space. Theorem
\ref{thm:metric spectral} applied to $f$ hands us a metric functional
$h$, for which
\[
\frac{1}{n}h\left(\sum_{k=0}^{n-1}U^{k}v\right)\rightarrow-\tau,
\]
where as before $\tau$ is the growth rate in this case of the norm
of the ergodic average. Either $\tau=0$ and we have
\[
\frac{1}{n}\sum_{k=0}^{n-1}U^{k}v\rightarrow0,
\]
or else we need to have that $h$ is a metric functional at infinity
(because $h$ must be unbounded from below), see Proposition \ref{prop: Hilbert space},
in fact it must be of the form $h(x)=-(x,w)$ with $\left\Vert w\right\Vert =1$
(since $\tau$ is the growth of the norm which $h$ applied to the
orbit matches). It is a well-known simple fact that if we have a sequence
of points $x_{n}$ in a Hilbert space and a vector $w$ with norm
$\left\Vert w\right\Vert \leq1,$ such that $(x_{n},w)\rightarrow1$
and $\left\Vert x_{n}\right\Vert \rightarrow1,$ then necessarily
$x_{n}\rightarrow w$ and $\left\Vert w\right\Vert =1$. In details
for the current situation:
\[
\left\Vert \frac{1}{n}\sum_{k=0}^{n-1}U^{k}v-\tau w\right\Vert ^{2}=\left\Vert \frac{1}{n}\sum_{k=0}^{n-1}U^{k}v\right\Vert ^{2}-2\left(\frac{1}{n}\sum_{k=0}^{n-1}U^{k}v,\tau w\right)+\left\Vert \tau w\right\Vert ^{2}\rightarrow\tau^{2}-2\tau^{2}+\tau^{2}=0.
\]
This finishes the proof of the classical mean ergodic theorem.

\section{Spectral metrics}

At the moment I do not see an appropriate axiomatization for the type
of metrics that will be useful. Here is an informal description, precise
definitions will follow in the particular situations studied later.
We will have a group of transformations, with elements denoted $f$
or $g$ etc, of a space. This space has objects denoted $\alpha$
with some sort of length $l$, the set or subset of these objects
should be invariant under the transformation and we define
\[
d(f,g)=\log\sup_{\alpha}\frac{l(g^{-1}\alpha)}{l(f^{-1}\alpha)}.
\]
The triangle inequality is automatic from the supremum, as is the
invariance. The function $d$ separates $f$ and $g$ if the set of
$\alpha$s is sufficiently extensive. On the other hand this ``distance''
is not necessarily symmetric. If desired it can be symmetrized in
a couple of trivial ways.
\begin{example*}
Define a hemi-metric between two linear operators $A$ and $B$ of
a real Hilbert space $H$: 
\[
d(A,B)=\log\sup_{v\neq0}\frac{\left\Vert B^{t}v\right\Vert }{\left\Vert A^{t}v\right\Vert }.
\]
(Here $t$ denotes the transpose.) Note that we may take the supremum
over the vectors with have unit length, and also we see that there
is the obvious connection to the operator norm:
\[
d(I,A)=\log\left\Vert A^{t}\right\Vert =\log\left\Vert A\right\Vert ,
\]
where $I$ denotes the identity operator.
\end{example*}
Here is an example of classical and very useful metrics:
\begin{example*}
Metrics on the Teichmuller space of a surface,
\[
d(x,y)=\log\sup_{\alpha\in\mathscr{S}}\frac{l_{y}(\alpha)}{l_{x}(\alpha)}
\]
where $x$ and $y$ denote different equivalence classes of metrics
(or complex structures) on a fixed surface, and $\mathscr{S}$ is
the set of non-trivial isotopy classes of simple closed curves, and
$l$ could denote various notions of length, depending on the choice
the metric is asymmetric. See the next section for more details and
applications.
\end{example*}
Here is another possibility:
\begin{example*}
Taken from \cite{DKN18}. Given two intervals $I,$ $J$ and a $C^{1}$-map
$g:I\rightarrow J$ which is a diffeomorphism onto its image. The
distortion coefficient is defined by 
\[
K(g;I):=\sup_{x,y\in I}\left|\log\left(\frac{g'(x)}{g'(y)}\right)\right|.
\]
This is subadditive under composition and $K(g,I)=K(g^{-1},g(I))$.
\end{example*}
Other examples of such metrics include the Hilbert, Funk, and Thompson
metrics on cones \cite{LN12}, Kobayashi pseudo-metric in the complex
category, Hofer's metric on symplectomorphisms \cite{Gr07}. and the
Lipschitz metric on outer space. 

\section{Application: Surface homeomorphisms}

Let $\Sigma$ be a surface of finite type. Let $\mathcal{S}$ be the
set of non-trivial isotopy classes of simple closed curves on $\Sigma$.
One denotes by $l_{x}(\alpha)$ the infimal length of curves in the
class of $\alpha$in the metric $x$. The metric $x$ can be considered
to be a point in the Teichmuller space $\mathcal{T}$ of $\Sigma$
and hence a hyperbolic metric, the length will be realized on a closed
geodesic. Thurston introduced the following asymmetric metric on $\mathcal{T}$
\[
L(x,y)=\log\sup_{\alpha\in\mathcal{S}}\frac{l_{y}(\alpha)}{l_{x}(\alpha)}.
\]

Thurston in a seminal work provided a sort-of Jordan normal form for
mapping classes of diffeomorphisms of $\Sigma$ and deduced from this
the existence of Lyapunov exponents or egienvalues as it were. A different
approach was proposed in \cite{Ka14}. In this section we will use
the metrics directly, without metric functionals explicitly. We will
use a lemma in a paper by Margulis and me \cite{KaM99}, that was
substantially sharpened in \cite{GK15}. 

Let $(\Omega,\rho)$ be a measure space with $\rho(\Omega)=1$ and
let $T:\Omega\rightarrow\Omega$ be an ergodic measure preserving
map. We consider a measurable map $\omega\mapsto f_{\omega}$ where
$f_{\omega}$ are homeomorphisms of $\Sigma$ (or more generally semi-contractions
of $\mathcal{T}$). We assume the appropriate measurability and integrability
assumptions. We form $Z_{n}(\omega):=f_{\omega}\circ f_{T\omega}\circ...\circ f_{T^{n-1}\omega}$.
Let 
\[
a(n,\omega)=L(x_{0},Z_{n}(\omega)x_{0}),
\]
which is a subadditive (sub-)cocycle and by the subadditive ergodic
theorem 
\[
a(n,\omega)/n
\]
 converges for a.e. $\omega$ to a constant which we denote by $\tau.$
Given a sequence of $\epsilon_{i}$ tending to $0$, Proposition 4.2.
in \cite{KaM99} implies that a.e there is an infinite sequence of
$n_{i}$ and numbers $K_{i}$ such that
\[
a(n_{i},\omega)-a(n_{i}-k,T^{k}\omega)\geq(\tau-\epsilon_{i})k
\]
for all $K_{i}\leq k\leq n_{i}.$ Moreover we may assume that $(\tau-\epsilon_{i})n_{i}\leq a(n_{i},\omega)\leq(\tau+\epsilon_{i})n_{i}$
for all $i$.

We will now use a property of $L$ established in \cite{LRT12} (that
was not used in \cite{Ka14}). Namely there is a finite set of curves
$\mu=\mu_{x_{0}}$ such that
\[
L(x_{0},y)=\log\sup_{\alpha\in\mathcal{S}}\frac{l_{y}(\alpha)}{l_{x_{0}}(\alpha)}\asymp\log\max_{\alpha\in\mu}\frac{l_{y}(\alpha)}{l_{x_{0}}(\alpha)}
\]
up to an additive error.

Now by the pigeon-hole principle refine $n_{i}$ such that there is
one curve $\alpha_{1}$ in $\mu$ wihch realizes the maximum for each
$y=Z_{n_{i}}(\omega)x_{0}$, in other words
\[
l_{Z_{n_{i}}x_{0}}(\alpha_{1})\asymp\exp(n_{i}(\tau\pm\epsilon_{i})
\]
Given the way $n_{i}$ were selected we have
\[
-\log\sup_{\alpha\in\mathcal{S}}\frac{l_{Z_{n}x_{0}}(\alpha)}{l_{Z_{k}x_{0}}(\alpha)}\geq-a(n_{i}-k,T^{k}\omega)\geq(\tau-\epsilon_{i})k-a(n_{i},\omega)
\]
(The first inequality is an equality in case the maps are isometries,
and not merely semicontractions.) It follows, like in \cite{Ka14},
that
\[
l_{Z_{k}x_{0}}(\alpha_{1})\geq l_{Z_{n_{i}}x_{0}}(\alpha_{1})e^{-a(n_{i},\omega)}e^{(\tau-\epsilon_{i})k}.
\]
Since no length of a curve can grow faster $e^{\tau k}$ we get from
this that
\[
l_{Z_{k}x_{0}}(\alpha_{1})^{1/k}\rightarrow e^{\tau}.
\]
In other words, the top Lyapunov exponents exists in this sense. For
the other exponents in the i.i.d case we refer to Horbez \cite{H16}
and in the general ergodic setting to a forthcoming joint paper with
Horbez. The purpose of this section was to show a different technique
to such results using spectral metrics and subadditive ergodic theory.
For a similar statement instead with the complex notion of extremal
length and using metric functionals, see \cite{GK15}.

\section{Conclusion}

\subsection{A brief discussion of examples of metrics}

The hyperbolic plane, recalled above, was discovered (rather late)
as a consequence of the inquiries on the role of the parallel axiom
in Euclidean geoemtry. At that time it was probably considered a curiosiy
but later it has turned out to be a basic example, connected to an
enormous amount of mathematics. In particular it is often the first
example in the following list of metric spaces (for references see
\cite{Gr07}or \cite{Ka05,GK15}).
\begin{itemize}
\item $L^{2}$-metrics: The fundamental gorup of a Riemannian manifold acts
by isomeotry on the universal covering space. In geoemtric gorup theory
it is of importance to have isometric actions on CAT(0) spaces, for
example CAT(0)-cube complexes.
\item Symmetric space type metric spaces: Extending the role of the hyperbolic
plane for 2x2 matrices and the moduli of 2-dimensional tori, there
are the Riemannian symmetric spaces. These have recently also be considered
with Finsler metrics. Other extensions are Techmuller space, Outer
space, spaces of Riemannian metrics on which homeomorphisms or diffeomorphisms
have induced isometric actions. Likewise for invertible bounded operators
on spaces of positive operators.
\item Hyperbolic metrics: The most important notion is Gromov hyperbolic
spaces, appearing in infinite group theory (Cayley-Dehn see below),
the curve complex (non-locally compact!) and similar complexes coming
from topology and group theory, and for Hilbert and Kobayashi metrics
in the next item.
\item $L^{\infty}$-metrics. Again generalizing the hyperbolic plane and
the positivity aspect of spaces of metrics, are cones and convex sets
with metrics of Hilbert metric type. In complex analysis in one or
several variables, we have pseudo-metrics of a similar type, generalizing
the Poincare metric, the maximal one being the Kobayashi pseudo-metric.
The operator norm, Hofer's metric or Thurston's asymmetric metric
are further examples. Roughly speaking these are the metrics referred
to above as \emph{spectral metrics}, and the natiural maps in question
in all these examples are semicontractions. 
\item $L^{1}$-metrics: Cayley-Dehn graphs associated with groups and a
generating set, the group itself act on the graphs by automorphich,
which amounts to isometries with repect to the word metric.
\end{itemize}

\subsection{Further directions}

Horbez in \cite{H16} extended \cite{Ka14} to give all exponents
in the i.i.d. case, thus in particular recovering Thurston's theorem
(except for the algebraic nature of the exponents), and also implemented
the same scheme for outer automorphisms group via an intricate study
of the Culler-Vogtmann outer space, in particular its metric functionals.
Other directions could be:
\begin{itemize}
\item Symplectomorphisms and Hofer's metric
\item Reprove some statements for invertible linear transformations or compact
operators using the asymmetric metric above
\item diffeomorphisms of manifolds, there are several suggestions for spectral
metrics here. See for instance the recent preprint \cite{Na18} of
Navas on distortion of 1-dimensional diffeomorphisms. 
\end{itemize}
In the works of Cheeger and collaborators on differentiability of
functions on metric spaces, see \cite{Ch99,Ch12}, the notion of generalized
linear function appears. In \cite{Ch99} Cheeger connects this to
Busemann functions, on the other hand he remarks in \cite{Ch12} that
non-constant such functions do not exists for most spaces. Perhaps
it remains to investigate how metric functionals relate to this subject.

Section de mathematiques, Universite de Geneve, 2-4 Rue du Lievre,
Case Postale 64, 1211 Geneve 4, Suisse 

e-mail: anders.karlsson@unige.ch 

and

Matematiska institutionen, Uppsala universitet, Box 256, 751 05 Uppsala,
Sweden 

e-mail: anders.karlsson@math.uu.se

\begin{thebibliography}{CDST18}
\bibitem[BGS85]{BGS85}Ballmann, Werner; Gromov, Mikhael; Schroeder,
Viktor, Manifolds of nonpositive curvature. Progress in Mathematics,
61. Birkhauser Boston, Inc., Boston, MA, 1985. vi+263 pp.

\bibitem[BrH99]{BrH99}Bridson, Martin R.; Haefliger, Andre, Metric
spaces of non-positive curvature. Grundlehren der Mathematischen Wissenschaften
{[}Fundamental Principles of Mathematical Sciences{]}, 319. Springer-Verlag,
Berlin, 1999. xxii+643 pp. 

\bibitem[Bo87]{Bo87}Bourbaki, N. \emph{Topological Vector Spaces,
Chapters 1-5}, Springer Verlag, 1987 (English translation)

\bibitem[B86]{B86}Bourgain, J. The metrical interpretation of superreflexivity
in Banach spaces. Israel J. Math. 56 (1986), no. 2, 222\textendash 230.

\bibitem[CLP17]{CLP17}Matias Carrasco, Pablo Lessa, Elliot Paquette,
A Furstenberg type formula for the speed of distance stationary sequences,
arxiv preprint 2017

\bibitem[CSW93]{CSW93}Cartwright, Donald I.; Soardi, Paolo M.; Woess,
Wolfgang Martin and end compactifications for non-locally finite graphs.
Trans. Amer. Math. Soc. 338 (1993), no. 2, 679\textendash 693. 

\bibitem[Ch12]{Ch12}Cheeger, Jeff, Quantitative differentiation:
a general formulation. Comm. Pure Appl. Math. 65 (2012), no. 12, 1641\textendash 1670. 

\bibitem[Ch99]{Ch99}Cheeger, J. Differentiability of Lipschitz functions
on metric measure spaces. Geom. Funct. Anal. 9 (1999), no. 3, 428\textendash 517.

\bibitem[Cl18]{Cl18}Claassens, Floris, The horofunction boundary
of infinite dimensional hyperbolic spaces, arxiv preprint 2018

\bibitem[CDST18]{CDST18}R. Coulon, R. Dougall, B. Schapira, S. Tapie,
Twisted Patterson-Sullivan measures and applications to amenability
and coverings, https://hal.archives-ouvertes.fr/hal-01881897

\bibitem[DKN18]{DKN18}Deroin, Bertrand; Kleptsyn, Victor; Navas,
Andres, On the ergodic theory of free group actions by real-analytic
circle diffeomorphisms. Invent. Math. 212 (2018), no. 3, 731\textendash 779.

\bibitem[Di81]{Di81}Dieudonne, Jean History of functional analysis.
North-Holland Mathematics Studies, 49. Notas de Matemetica {[}Mathematical
Notes{]}, 77. North-Holland Publishing Co., Amsterdam-New York, 1981.
vi+312 pp.

\bibitem[GV12]{GV12}Gaubert, S. Vigeral, G. A maximin characterisation
of the escape rate of non-expansive mappings in metrically convex
spaces, \emph{Math. Proc. Cambridge Phil. Soc.} 152, Issue 2 (2012)
, 341-363

\bibitem[Gr07]{Gr07}Gromov, Misha Metric structures for Riemannian
and non-Riemannian spaces. Based on the 1981 French original. With
appendices by M. Katz, P. Pansu and S. Semmes. Translated from the
French by Sean Michael Bates. Reprint of the 2001 English edition.
Modern Birkhauser Classics. Birkhauser Boston, Inc., Boston, MA, 2007.
xx+585 pp.

\bibitem[G18]{G18}Gouezel, S. Subadditive cocycles and horofunctions,
to appear in Proceedings of the ICM 2018, https://eta.impa.br/dl/170.pdf

\bibitem[GK15]{GK15}Gouezel, S., Karlsson, A. Subadditive and multilplicative
ergodic theorems, To appear in \emph{J. Eur. Math. Soc.}

\bibitem[Gu17]{Gu17}Gutierrez, Armando W. The horofunction boundary
of finite-dimensional \ensuremath{\ell}p spaces, to appear in \emph{Colloq.
Math.}

\bibitem[Gu18]{Gu18}Gutierrez, Armando W. On the metric compactification
of infinite-dimensional Banach spaces, to appear in \emph{Canadian
Math. Bull}.

\bibitem[Gu19]{Gu19}Gutierrez, Armando W. The metric compactification
of $L_{p}$ represented by random measures, arXiv:1903.02502 

\bibitem[H16]{H16}Horbez, Camille, The horoboundary of outer space,
and growth under random automorphisms, \emph{Ann. Scient. Ec. Norm.
Sup.} (4) 49(5) (2016), 1075-1123

\bibitem[FLP79]{FLP79}Fathi, A, Laudenbach, F, and Poenaru, V. \emph{Travaux
de Thurston sur les surfaces}. Asterisque, 66-67. Societe Mathematique
de France, Paris, 1979. 284 pp. 

\bibitem[KL18]{KL18}Kapovich, M. and Leeb, B. Finsler bordifications
of symmetric and certain locally symmetric spaces. Geometry and Topology,
22 (2018) 2533-2646.

\bibitem[Ka01]{Ka01}Karlsson, Anders, Non-expanding maps and Busemann
functions. Ergodic Theory Dynam. Systems 21 (2001), no. 5, 1447\textendash 1457. 

\bibitem[Ka05]{Ka05}Karlsson, Anders, On the dynamics of isometries.
Geom. Topol. 9 (2005), 2359\textendash 2394. 

\bibitem[Ka14]{Ka14}Karlsson, Anders, Two extensions of Thurston's
spectral theorem for surface diffeomorphisms, \emph{Bull. London Math.
Soc.} (2014) 46 (2): 217-226

\bibitem[KaM99]{KaM99}Karlsson, Anders; Margulis, Gregory A. A multiplicative
ergodic theorem and nonpositively curved spaces. Comm. Math. Phys.
208 (1999), no. 1, 107\textendash 123.

\bibitem[KaL06]{KaL06}Karlsson, Anders; Ledrappier, Francois, On
laws of large numbers for random walks. Ann. Probab. 34 (2006), no.
5, 1693\textendash 1706, 

\bibitem[KaL11]{KaL11}Karlsson, Anders; Ledrappier, Francois, Noncommutative
ergodic theorems. \emph{Geometry, rigidity, and group actions}, 396\textendash 418,
Chicago Lectures in Math., Univ. Chicago Press, Chicago, IL, 2011,

\bibitem[La13]{La13}Lang, Urs, Injective hulls of certain discrete
metric spaces and groups, J. Topol. Anal. 5 (2013), 297-331

\bibitem[L02]{L02}Lax, Peter D, \emph{Functional Analysis}, Wiley,
2002 

\bibitem[LN12]{LN12}Lemmens, Bas; Nussbaum, Roger Nonlinear Perron-Frobenius
theory. Cambridge Tracts in Mathematics, 189. Cambridge University
Press, Cambridge, 2012. xii+323 pp. 

\bibitem[LRT12]{LRT12}Lenzhen, Anna; Rafi, Kasra; Tao, Jing Bounded
combinatorics and the Lipschitz metric on Teichmuller space. Geom.
Dedicata 159 (2012), 353\textendash 371.

\bibitem[MT18]{MT18}Maher, Joseph, Tiozzo, Giulio, Random walks on
weakly hyperbolic groups, to appear in \emph{Journal fur die reine
und angewandte Mathematik 2018.}

\bibitem[Ma81]{Ma81}Margulis, G. A. On the decomposition of discrete
subgroups into amalgams. Selected translations. Selecta Math. Soviet.
1 (1981), no. 2, 197\textendash 213. 

\bibitem[Mo06]{Mo06}Monod, Nicolas Superrigidity for irreducible
lattices and geometric splitting. J. Amer. Math. Soc. 19 (2006), no.
4, 781\textendash 814.

\bibitem[N18]{N18}Naor, Assaf, Metric dimension reduction: A snapshot
of the Ribe program, to appear in Proceedings of the ICM 2018 https://arxiv.org/abs/1809.02376

\bibitem[Na18]{Na18}Navas, Andres, On conjugates and the asymptotic
distortion of 1-dimensional \$C\textasciicircum{}\{1+bv\}\$ diffeomorphisms,
https://arxiv.org/pdf/1811.06077.pdf

\bibitem[Sc91]{Sc91}Schaumloffel, Kay-Uwe Multiplicative ergodic
theorems in infinite dimensions. Lyapunov exponents (Oberwolfach,
1990), 187\textendash 195, Lecture Notes in Math., 1486, Springer,
Berlin, 1991.

\bibitem[Si15]{Si15}Simon, Barry, \emph{Operator Theory,} \emph{A
Comprehensive Course in Analysis, Part 4, }AMS, Providence RI, 2015

\bibitem[StS11]{StS11}Stein, Elias M.; Shakarchi, Rami,\emph{ Functional
Analysis: Introduction to Further Topics in Analysis}. Princeton University
Press, 2011

\bibitem[T88]{T88}Thurston, William P. On the geometry and dynamics
of diffeomorphisms of surfaces. Bull. Amer. Math. Soc. (N.S.) 19 (1988),
no. 2, 417\textendash 431

\bibitem[T86]{T86}Thurston, W. Minimal stretch maps between hyperbolic
surfaces. preprint, arXiv:math GT/9801039, 1986.

\bibitem[TY16]{TY16}Tointon, Matthew C. H., Yadin, Ariel, Horofunctions
on graphs of linear growth. C. R. Math. Acad. Sci. Paris 354 (2016),
no. 12, 1151\textendash 1154. 

\bibitem[W07]{W07}Walsh, Cormac The horofunction boundary of finite-dimensional
normed spaces. Math. Proc. Cambridge Philos. Soc., 142(3):497\textendash 507,
2007.

\bibitem[W11]{W11}Walsh, Cormac, The horoboundary and isometry group
of Thurston's Lipschitz metric, arXiv:1006.2158

\bibitem[W18]{W18}Walsh, Cormac, Hilbert and Thompson geometries
isometric to infinite-dimensional Banach spaces. To appear in Annals
Instit. Fourier.

\bibitem[Y11]{Y11}Yau, Shing-Tung Perspectives on geometric analysis.
Geometry and analysis. No. 2, 417\textendash 520, Adv. Lect. Math.
(ALM), 18, Int. Press, Somerville, MA, 2011.
\end{thebibliography}
\end{document}